\numberwithin{equation}{section}
\newtheorem{lemma}{Lemma}[section]
\newtheorem{theorem}{Theorem}[section]
\newtheorem{remark}{Remark}
\newcommand{\Rmnum}[1]{\expandafter\@slowromancap\romannumeral #1@}
\begin{document}
\begin{frontmatter}
\def\ll{\mbox{ } \hskip 1em}
\def\matrix{\,\vcenter\bgroup\plainLet@\plainvspace@
    \normalbaselines
   \math\ialign\bgroup\hfil$##$\hfil&&\quad\hfil$##$\hfil\crcr
      \mathstrut\crcr\noalign{\kern-\baselineskip}}
\def\mathbb{}

\title{High-order discretization errors for the Caputo derivative in \texorpdfstring{H\"{o}lder}{Holder} spaces}

\author[mymainaddress1]{Xiangyi Peng}
\ead{pxymath18@smail.xtu.edu.cn}

\author[mymainaddress1]{Lisen Ding}
\ead{dingmath15@smail.xtu.edu.cn}

\author[mymainaddress1]{Dongling Wang\texorpdfstring{\corref{mycorrespondingauthor}}{}}
\ead{wdymath@xtu.edu.cn}

\cortext[mycorrespondingauthor]{Corresponding author. The work is supported in part by the National Natural Science Foundation of China under grants 12271463.}
\address[mymainaddress1]{Hunan Key Laboratory for Computation and Simulation in Science and Engineering, School of Mathematics and Computational Science, Xiangtan University, Xiangtan, Hunan 411105, China}

\begin{abstract}
  Building upon the recent work of Teso and P{l}ociniczak (2025) regarding L1 discretization errors for the Caputo derivative in H\"{o}lder spaces, this study extends the analysis to higher-order discretization errors within the same functional framework. We first investigate truncation errors for the L2 and L1-2 methods, which approximate the Caputo derivative via piecewise quadratic interpolation. Then we generalize the results to arbitrary high-order discretization. Theoretical analyses reveal a unified error structure across all schemes: the convergence order equals the difference between the smoothness degree of the function space and the fractional derivative order, i.e., \emph{order of error = degree of smoothness $-$ order of the derivative}. Numerical experiments validate these theoretical findings. 
\end{abstract}

\begin{keyword}
  Caputo derivative \sep H\"{o}lder spaces \sep L2 truncation error \sep High-order discretization error
\end{keyword}

\end{frontmatter}

\section{Introduction}
Fractional-order derivative, as a generalization of integer-order derivative, has a host of physical applications. In numerous physical processes with non-local effects or generic memory characteristics, fractional-order differential equations have superior modeling capabilities compared to classical integer-order differential equations. In particular, in the process of anomalous diffusion, probability analysis based on random walk models naturally leads to fractional differential operators \cite{jin2022fractional,wang2023optimal}. Unlike integer-order derivatives, fractional operators inherently capture long-range dependencies, making them indispensable for describing complex dynamics across time or space. Among various fractional derivatives, the Caputo derivative is defined for $0<\alpha< 1$ as 
\begin{equation}\label{eq1.1}
  \mathcal{D}_{t}^\alpha u(t) := \frac{1}{\Gamma(1-\alpha)}\int_{0}^{t}\frac{u'(s)}{(t-s)^\alpha}ds,
\end{equation}
where $\Gamma(\cdot)$ denotes the Gamma function. 

The singular Abel kernel $k_{\alpha}(t)=\frac{t^{-\alpha}}{\Gamma(1-\alpha)}$ involved in Caputo fractional derivative.
The commonly studied L1 scheme achieves an optimal $\mathcal{O}(\tau^{2-\alpha})$ convergence rate for $u\in C^{2}[0,T]$ \cite{sun2006fully,lin2007finite}. Yet, real-world problems often involve solutions with lower regularity and therefore require error estimation in weaker spaces. H\"{o}lder spaces $C^{m,\beta}([0,T])$, equipped with differentiability and H\"{o}lder continuity, provide a natural framework for such analyses \cite{jin2022fractional,stynes2022survey}. Recent work by Teso and P{\l}ociniczak \cite{teso2025note} show that the L1 scheme's truncation error in $C^{m,\beta}([0,T])$ spaces scales as $\mathcal{O}(\tau^{m+\beta-\alpha})$ for $m=0,1$, provided that $m+\beta>\alpha$. This result highlights a fundamental trade-off: the convergence order equals the total smoothness degree (differentiability + H\"{o}lder exponent) minus the derivative order. 

Despite this advancement, critical gaps persist. First, higher-order methods like the L2 and L1-2 schemes (which employ piecewise quadratic interpolants to improve accuracy) have not been rigorously analyzed in H\"{o}lder spaces. While these methods may achieve superior rates \(\mathcal{O}(\tau^{3-\alpha})\) for smooth solutions \cite{lv2016error}, their performance under limited regularity remains $u \in C^3[0,T] $. Second, the generalization to arbitrary high-order discretizations (e.g., \(L_k\)-type schemes with \(k \geq 3\)) also lacks theoretical analysis, especially for error propagation in low-regularity states. Addressing these issues could provide a path for advancing numerical methods for realistic non-smooth problems.

In this paper, we extend the analysis of \cite{teso2025note} to high-order discretization schemes for the Caputo derivative in H\"{o}lder spaces. Our contributions are threefold:
\begin{itemize}
    \item \textbf{Error analysis for L2 scheme}: We prove the L2 scheme retains the error structure \(\mathcal{O}(\tau^{m+\beta-\alpha})\) in \(C^{m,\beta}[0,T]\) for \(m = 0, 1, 2\), with optimal rates \(\mathcal{O}(\tau^{3-\alpha})\) when \(u \in C^{2,1}[0,T]\).
    
    \item \textbf{Generalization to \(L_k\)-type schemes}: For \(k \leq 6\), we establish analogous error bounds, demonstrating that piecewise polynomial interpolation of degree \(k\) preserve the \emph{smoothness-minus-derivative-order} convergence law, despite the increased complexity of error analysis. 
    
    \item \textbf{Numerical validation}: Experiment on synthetic functions with controlled regularity confirm the theoretical predictions, highlighting the consistent convergent conclusions, i.e, $\mathcal{O}(\tau^{m+\beta-\alpha})$.
\end{itemize}

This study establishes a unified mathematical framework for quantifying discretization errors in low-regularity function spaces. The proposed theoretical results provide practical criteria for optimizing numerical scheme selection according to solution regularity, thereby significantly improving computational efficacy in fractional calculus applications.
The remainder of this paper is organized as follows: Section~\ref{sec2} introduces notations, properties of H\"{o}lder spaces, and key preliminary results. Section~\ref{sec3} presents truncation error estimates for L2 and L1-2 schemes. Section~\ref{sec4} generalizes the analysis to high-order \(L_k\) methods. Numerical experiments in Section~\ref{sec5} validate the theory, and a brief concluding is given in Section~\ref{sec6}.

\section{Preliminaries} \label{sec2}
We begin by establishing the notation and functional framework for analyzing discretization errors of the Caputo derivative. Let \( T > 0 \) be a fixed terminal time, and consider a uniform temporal grid \( \{ t_n = n\tau \mid n = 0, 1, \ldots, N \} \) with step size \( \tau = T/N \), the value of the function at each time node we abbreviate as $u^j:=u(t_j),~j=0,1,\dots,N$. 
To facilitate numerical approximation, we reformulate \eqref{eq1.1} via integration by parts
\begin{equation}\label{eq:caputo_integrated}
\mathcal{D}_t^\alpha u(t) = \frac{1}{\Gamma(1-\alpha)} \frac{u(t) - u(0)}{t^\alpha} + \frac{\alpha}{\Gamma(1-\alpha)} \int_0^t \frac{u(t) - u(s)}{(t-s)^{\alpha+1}} \, ds.
\end{equation}
\begin{remark}
  The Caputo derivative \eqref{eq:caputo_integrated} is well-defined for \( u \in C^{0,\beta}[0,T] \) with \( \beta >\alpha \) {\rm \cite{teso2025note}}, namely, 
  \[ |\mathcal{D}_t^\alpha u(t)|< \infty \quad \text{for all } t \in [0,T]. 
  \]
\end{remark}

\subsection{Discretization Schemes}
\paragraph{L2 Scheme} For \( t_n \geq t_1 \), the L2 discrete fractional-derivative operator defined by \(\delta_\tau^\alpha u(t_n) \) approximates \( \mathcal{D}_t^\alpha u(t_n) \) using piecewise quadratic interpolation \cite{kopteva2025error,lv2016error}
 \begin{equation}\label{L2_scheme}
  \delta_\tau^\alpha u(t_n):=\mathcal{D}_{t}^\alpha(\mathcal{P}^n u)(t_n),
\end{equation}
 in which the interpolant \( \mathcal{P}^n u \) over subintervals \( (t_{j-1}, t_{j}) \) is defined as
\begin{equation*}
 \mathcal{P}^n u(s) := 
\begin{cases} 
\Pi_{1,1}u(s) & \text{on } (0, t_1), \\
\Pi_{2,j}u(s) & \text{on } (t_{j-1}, t_j) \text{~ for~ } 1 \leq j \leq n-1, \\
\Pi_{2,n-1}u(s) & \text{on } (t_{n-1}, t_n),
\end{cases}
\end{equation*}
where \( \Pi_{1,j} \) and \( \Pi_{2,j} \) denote piecewise linear and quadratic Lagrange interpolations, respectively:
\begin{align}
\Pi_{1,j}u(s) &:= \frac{t_j - s}{\tau} u^{j-1} + \frac{s - t_{j-1}}{\tau} u^j, \quad s \in [t_{j-1}, t_j], \\
\label{eq:quadratic_interpolation} \Pi_{2,j}u(s) &:= \frac{(s - t_j)(s - t_{j+1})}{2\tau^2} u^{j-1} - \frac{(s - t_{j-1})(s - t_{j+1})}{\tau^2} u^j + \frac{(s - t_{j-1})(s - t_j)}{2\tau^2} u^{j+1}, \quad s \in [t_{j-1}, t_{j+1}].
\end{align}

\paragraph{L1-2 Scheme} The L1-2 discrete operator defined by \( \Delta_\tau^\alpha u(t_n) \) employs a hybrid approach (cf. \cite{gao2014new}): piecewise linear interpolation on \( (0, t_1) \) and quadratic interpolation elsewhere. The L1-2 scheme approximates \( \mathcal{D}_t^\alpha u(t_n) \) as
\begin{equation}
  \label{L1-2_scheme}
  \Delta_\tau^\alpha u(t_n):=\mathbb{D}_{t}^\alpha(\mathcal{Q}^n u)(t_n),\quad \mathcal{Q}^n u(s) := 
\begin{cases} 
\Pi_{1,1}u(s) & \text{on } (0, t_1), \\
\Pi_{2,j-1}u(s) & \text{on } (t_{j-1}, t_j) \text{ for } 2 \leq j \leq n.
\end{cases}
\end{equation}

\subsection{H\"{o}lder Spaces and Key Lemma}
For \( m \in \mathbb{N} \) and \( \beta \in (0,1] \), the H\"{o}lder space \( C^{m,\beta}([0,T]) \) is defined by
\[
C^{m,\beta}([0,T]) := \left\{ u \in C^m[0,T] : [u^{(m)}]_{C^{0,\beta}[0,T]} < \infty \right\}, \text{~where~} [u]_{C^{0,\beta}[0,T]}:=\sup_{\substack{t, s \in [0,T] \\ t \neq s}}\frac{|u(t) - u(s)|}{|t - s|^\beta}.
\]
Given a function $u\in C([0,T])$, we define its modulus of continuity $\Lambda_u:[0,T] \to \mathbb{R^+}$ by
$$
\Lambda_u (\delta):=\sup_{|t-s|\leq \delta,~t,s\in [0,T]}|u(t)-u(s)|.
$$
\begin{remark} \label{rem1}
  It is easy to know $\Lambda_u (\delta)$ is nondecreasing and $\Lambda_u (\delta)\to 0^+$ as $\delta \to 0^+$. In addition, if $u\in C^{0,\beta}([0,T])$ for $\beta \in (0,1]$, then it holds
\begin{equation}\label{eq2.6}
  \Lambda_u (\delta) \leq \delta^\beta [u]_{C^{0,\beta}[0,T]}.
\end{equation}
\end{remark}

The following lemma quantifies the interpolation errors for quadratic approximations in H\"{o}lder spaces, serving as the cornerstone for subsequent error analyses.

   \begin{lemma}[Interpolation Errors in H\"{o}lder Spaces] \label{lemma2.1}
     Suppose that $u\in C^{m,\beta}([0,T])$ with $m=0,1,2$ and $\beta\in(0,1]$. Then for $s\in[t_{j-1},t_{j+1}]$ with $j=1,2,\dots, n-1$, we have
     \begin{equation*}
        |u(s)-\Pi_{2,j}u(s)| \leq
       \begin{cases}
         \Big[(2^{\beta-1}+2)(t_{j+1}-s)\tau^{-1+\beta}+(t_{j+1}-s)^\beta\Big][u]_{C^{0,\beta}[0,T]} \quad & \text{for~}m=0, \\
         2(t_{j+1}-s)\tau^\beta[u']_{C^{0,\beta}[0,T]} \quad &  \text{for~}m=1, \\
         (2^{\beta+1}+2)(t_{j+1}-s)\tau^{\beta+1}[u'']_{C^{0,\beta}[0,T]} \quad & \text{for~}m=2. 
       \end{cases}
      \end{equation*}
   \end{lemma}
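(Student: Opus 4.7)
The plan is to work from the Lagrange representation
\[
\Pi_{2,j}u(s) = \sum_{k=j-1}^{j+1} L_k(s)\,u(t_k),
\]
and exploit the polynomial-reproduction properties $\sum_k L_k(s) \equiv 1$, $\sum_k L_k(s)(s-t_k) \equiv 0$, and $\Pi_{2,j}P = P$ for every polynomial $P$ of degree $\leq 2$. The size estimates to be used throughout are $|s-t_{j-1}|\leq 2\tau$, $|s-t_j|\leq \tau$, $|s-t_{j+1}|=t_{j+1}-s$, together with the explicit Lagrange factors $|L_{j-1}(s)|=\tfrac{|s-t_j|(t_{j+1}-s)}{2\tau^2}$, $|L_j(s)|=\tfrac{(s-t_{j-1})(t_{j+1}-s)}{\tau^2}$, and $|L_{j+1}(s)|=\tfrac{(s-t_{j-1})|s-t_j|}{2\tau^2}$. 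The key observation is that $|L_{j-1}|$ and $|L_j|$ each already carry a factor of $(t_{j+1}-s)$, while $|L_{j+1}|\leq 1$ does not; this dichotomy is what produces the two distinct contributions $(t_{j+1}-s)\tau^{\beta-1}$ and $(t_{j+1}-s)^\beta$ in the $m=0$ bound.

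For $m=0$ I would bound
\[
u(s)-\Pi_{2,j}u(s) = \sum_k L_k(s)\bigl(u(s)-u(t_k)\bigr)
\]
termwise using $|u(s)-u(t_k)|\leq [u]_{C^{0,\beta}}|s-t_k|^\beta$. The $k=j-1$ and $k=j$ contributions then each yield a bound of the form $C\,\tau^{\beta-1}(t_{j+1}-s)$ with explicit constants $2^{\beta-1}$ and $2$, while the $k=j+1$ term, which lacks the $(t_{j+1}-s)$ factor from its Lagrange basis, gives at most $(t_{j+1}-s)^\beta$; this is exactly the stated $m=0$ inequality.

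For $m=1$ and $m=2$ I would subtract a Taylor polynomial centered at $s$, which is reproduced exactly by $\Pi_{2,j}$. In the $m=1$ case, the identity $\sum_k L_k(s)(s-t_k)=0$ lets me rewrite
\[
u(s)-\Pi_{2,j}u(s) = \sum_k L_k(s)\bigl(u(s)-u(t_k)-u'(s)(s-t_k)\bigr),
\]
and the inner bracket equals $-\int_s^{t_k}(u'(\xi)-u'(s))\,d\xi$ and is therefore controlled by $\tfrac{[u']_{C^{0,\beta}}}{1+\beta}|t_k-s|^{1+\beta}$. For $m=2$, setting $Q_s(t):=u(s)+u'(s)(t-s)+\tfrac{1}{2}u''(s)(t-s)^2$ and using $\Pi_{2,j}Q_s=Q_s$, one obtains $u(s)-\Pi_{2,j}u(s) = -\sum_k L_k(s)\bigl(u(t_k)-Q_s(t_k)\bigr)$ with
\[
u(t_k)-Q_s(t_k) = \int_s^{t_k}(t_k-\xi)\bigl(u''(\xi)-u''(s)\bigr)\,d\xi,
\]
and hence bounded by $\tfrac{[u'']_{C^{0,\beta}}}{(1+\beta)(2+\beta)}|t_k-s|^{2+\beta}$. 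In both the $m=1$ and $m=2$ cases the problem reduces to estimating $\sum_k|L_k(s)|\,|s-t_k|^{m+\beta}$, which by the same size estimates is of order $\tau^{m-1+\beta}(t_{j+1}-s)$, matching the claimed structure.

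The principal obstacle is not conceptual but combinatorial bookkeeping: for each $m$ one must carefully carry through the three $k$-contributions so that the precise constants $2^{\beta-1}+2$, $2$, and $2^{\beta+1}+2$ emerge once the Taylor-remainder prefactors $\tfrac{1}{1+\beta}$ and $\tfrac{1}{(1+\beta)(2+\beta)}$ are absorbed into the geometric sums. A secondary but delicate point is that, in the $m=0$ case, one must resist extracting $(t_{j+1}-s)$ linearly from the $k=j+1$ term, since doing so would destroy the $(t_{j+1}-s)^\beta$ scaling that is crucial for the subsequent L2 truncation-error analysis.
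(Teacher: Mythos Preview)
Your approach is sound and, for $m=0$, coincides with the paper's. For $m=1$ and $m=2$, however, it differs meaningfully from what the paper does.

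The paper does not subtract the Taylor polynomial centred at $s$. For $m=1$ it splits the middle Lagrange term in half and pairs the three contributions into two groups, using the integral mean-value representation $u(s)-u^{i}=(s-t_i)\int_0^1 u'\bigl(\rho t_i+(1-\rho)s\bigr)\,d\rho$ to arrive at
\[
u(s)-\Pi_{2,j}u(s)=\frac{(s-t_{j-1})(s-t_j)(s-t_{j+1})}{2\tau^{2}}\sum_{\sigma=\pm1}\int_0^1\Bigl(u'\bigl(\rho t_{j+\sigma}+(1-\rho)s\bigr)-u'\bigl(\rho t_{j}+(1-\rho)s\bigr)\Bigr)\,d\rho.
\]
The point of this pairing is that the two arguments inside each integrand differ by exactly $\rho\tau$, so the H\"older estimate produces $\int_0^1(\rho\tau)^\beta d\rho<\tau^\beta$, while the prefactor satisfies $\bigl|(s-t_{j-1})(s-t_j)(s-t_{j+1})\bigr|/(2\tau^{2})\le t_{j+1}-s$. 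This is what yields the clean constant $2$. For $m=2$ the paper extends the same pairing with second-order Taylor expansions at the nodes, obtaining four integrals of $u''$-differences and the constant $2^{\beta+1}+2$.

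Your route, subtracting a Taylor polynomial at $s$ and invoking polynomial reproduction, is more transparent and systematic (and generalizes cleanly to the $L_k$ setting). For $m=2$ it in fact delivers a \emph{smaller} constant, $\dfrac{2+2^{\beta+2}}{(1+\beta)(2+\beta)}$, than the paper's $2^{\beta+1}+2$. But for $m=1$ it does not recover the stated constant $2$: carrying your three $k$-terms through gives $\dfrac{2+2^{\beta+1}}{1+\beta}$, which lies between $3$ and $4$ on $(0,1]$. The reason is that in your decomposition the H\"older increment $|u'(\xi)-u'(s)|$ is taken over distances up to $2\tau$, whereas the paper's pairing trick reduces each increment to arguments differing by $\rho\tau$. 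So your claim that the precise constants ``emerge once the Taylor-remainder prefactors are absorbed'' is a bit optimistic in the $m=1$ case; the order and the $(t_{j+1}-s)\tau^{m-1+\beta}$ structure are exactly right, only that particular constant slips.
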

   \begin{proof}
    First, we discuss the case $\textbf{m=0}$. Note that the coefficients of $u^{j-1}$, $u^j$ and $u^{j+1}$ of \eqref{eq:quadratic_interpolation} sum up to $1$ and $s\in[t_{j-1},t_{j+1}]$. Using Remark \ref{rem1}, we can get
\begin{equation*}
  \begin{split}
    &\left|u(s)-\Pi_{2,j}(s)\right|\\
    \leq&\left|\frac{(s-t_{j})(s-t_{j+1})}{2\tau^2}\right|\left|u(s)-u^{j-1}\right|+\left|\frac{(s-t_{j-1})(s-t_{j+1})}{\tau^2}\right|\left|u(s)-u^{j}\right|+\left|\frac{(s-t_{j-1})(s-t_{j})}{2\tau^2}\right|\left|u(s)-u^{j+1}\right|\\
    \leq& \frac{1}{2}\tau^{-1}(t_{j+1}-s)\Lambda_u(2\tau)+2\tau^{-1}(t_{j+1}-s)\Lambda_u(\tau)+\Lambda_u(t_{j+1}-s)\\
    \leq& (2^{\beta-1}+2)\tau^{-1+\beta}(t_{j+1}-s)[u]_{C^{0,\beta}[0,T]}+(t_{j+1}-s)^\beta [u]_{C^{0,\beta}[0,T]}.
  \end{split}
\end{equation*}

Then, if $\textbf{m=1}$, we can rewrite $u(s)-\Pi_{2,j}(s)$ in two parts
\begin{equation*}
  \begin{split}
    u(s)-\Pi_{2,j}(s)=&\left(\frac{(s-t_{j})(s-t_{j+1})}{2\tau^2}(u(s)-u^{j-1})-\frac{(s-t_{j-1})(s-t_{j+1})}{2\tau^2}(u(s)-u^{j})\right)\\
    &+\left(\frac{(s-t_{j-1})(s-t_{j})}{2\tau^2}(u(s)-u^{j+1})-\frac{(s-t_{j-1})(s-t_{j+1})}{2\tau^2}(u(s)-u^{j})\right).
  \end{split}
\end{equation*}
Since $u\in C^1[0,T]$, there is Taylor expansion with integral remainder
\begin{equation*}
  \begin{split}
  u(s)-u^{i}=(s-t_{i})\int_0^1 u'[\rho t_{i}+(1-\rho )s]d \rho.
\end{split}
\end{equation*}
Thus, by the above Taylor expansion, we can arrive at
\begin{equation*}
  \begin{split}
    \left|u(s)-\Pi_{2,j}(s)\right|\leq &\frac{1}{2}\tau^{-2}\left|(s-t_{j-1})(s-t_{j})(s-t_{j+1})\right|\int_0^1 \Big|u'[\rho t_{j-1}+(1-\rho )s]-u'[\rho t_{j}+(1-\rho )s]\Big| d \rho\\
    &+\frac{1}{2}\tau^{-2}\left|(s-t_{j-1})(s-t_{j})(s-t_{j+1})\right|\int_0^1 \Big|u'[\rho t_{j+1}+(1-\rho )s]-u'[\rho t_{j}+(1-\rho )s]\Big| d \rho\\
    \leq & (t_{j+1}-s) \int_0^1 \Lambda_{u'}(\rho \tau) d \rho+(t_{j+1}-s) \int_0^1 \Lambda_{u'}(\rho \tau) d \rho \leq 2(t_{j+1}-s)\tau^\beta [u']_{C^{0,\beta}[0,T]},
  \end{split}
\end{equation*}
where Remark \ref{rem1} and $\int_{0}^{1}\rho^\beta d \rho<1$ are used in the last step.

Finally, we analyze the case $\textbf{m=2}$. Since $u\in C^2[0,T]$ and $s\in[t_{j-1},t_{j+1}]$, by Taylor expansion with integral remainder
\begin{equation*}
  \begin{split}
    &u(s)-u^{i}=u'(t_{i})(s-t_{i})+(s-t_{i})^2\int_0^1\rho u''[\rho t_{i}+(1-\rho )s]d \rho,\\
    &u'(t_{i+1})-u'(t_i)=\tau \int_0^1 u''[\rho t_{i+1}+(1-\rho )t_i]d \rho,
\end{split}
\end{equation*}
then we have
\begin{equation*}
  \begin{split}
    &u(s)-\Pi_{2,j}(s)\\
    =&\frac{(s-t_{j})(s-t_{j+1})}{2\tau^2}(u(s)-u^{j-1})-\frac{(s-t_{j-1})(s-t_{j+1})}{\tau^2}(u(s)-u^{j})+\frac{(s-t_{j-1})(s-t_{j})}{2\tau^2}(u(s)-u^{j+1})\\
    =&\frac{1}{2}\tau^{-2}(s-t_{j-1})(s-t_{j})(s-t_{j+1})\bigg\{u'(t_{j-1})-2u'(t_{j})+u'(t_{j+1})+(s-t_{j-1})\int_0^1\rho u''[\rho t_{j-1}+(1-\rho )s]d \rho\\
    &\hspace{4em}-2(s-t_{j})\int_0^1\rho u''[\rho t_{j}+(1-\rho )s]d \rho+(s-t_{j+1})\int_0^1\rho u''[\rho t_{j+1}+(1-\rho )s]d \rho \bigg\}\\
    =&\frac{1}{2}\tau^{-2}(s-t_{j-1})(s-t_{j})(s-t_{j+1})\bigg\{ \tau \int_0^1 \Big(u''[\rho t_{j+1}+(1-\rho )t_j]-u''[\rho t_{j-1}+(1-\rho )t_j]\Big)d \rho\\
    &\hspace{4em}+(s-t_{j})\int_0^1 \rho\Big(u''[\rho t_{j-1}+(1-\rho )s]-u''[\rho t_{j}+(1-\rho )s]\Big)d \rho\\
    &\hspace{4em}+(s-t_{j})\int_0^1 \rho\Big(u''[\rho t_{j+1}+(1-\rho )s]-u''[\rho t_{j}+(1-\rho )s]\Big)d \rho\\
    &\hspace{4em}+ \tau \int_0^1 \rho\Big(u''[\rho t_{j-1}+(1-\rho )s]-u''[\rho t_{j+1}+(1-\rho )s]\Big)d \rho\bigg\}.
  \end{split}
\end{equation*}
Moreover, we have
\begin{equation*}
  \begin{split}
    &\left|u(s)-\Pi_{2,j}(s)\right|\\
    \leq &(t_{j+1}-s)\bigg\{\tau\int_0^1 \Lambda_{u''}(2\rho \tau) d \rho+\tau\int_0^1 \rho\Lambda_{u''}(\rho \tau) d \rho+\tau\int_0^1 \rho\Lambda_{u''}(\rho \tau) d \rho+\tau\int_0^1 \rho\Lambda_{u''}(2\rho \tau) d \rho\bigg\}\\
    \leq & (t_{j+1}-s)\tau\bigg\{2^\beta \tau^\beta \int_0^1 \rho^\beta d \rho +2\tau^\beta \int_0^1 \rho^{\beta+1} d \rho+2^\beta \tau^\beta \int_0^1 \rho^{\beta+1} d \rho\bigg\}[u'']_{C^{0,\beta}[0,T]}\\
    \leq &(2^{\beta+1}+2)(t_{j+1}-s)\tau^{\beta+1}[u'']_{C^{0,\beta}[0,T]},
  \end{split}
\end{equation*}
where we used Remark \ref{rem1}, the fact of that $\int_{0}^{1}\rho^\beta d \rho<1$ and $\int_{0}^{1}\rho^{\beta+1} d \rho<1$.
   \end{proof}
 
This lemma explicitly relates interpolation errors to the regularity parameters \( m \) and \( \beta \), foreshadowing the \( \tau^{m+\beta-\alpha} \) convergence rates in subsequent truncation error analyses.

\section{Truncation-Error Estimates for L2 and L1-2 Schemes} \label{sec3}
In the following, we first present the analysis for the L2-discretization errors.
\begin{theorem} \label{th3.1}
  Suppose that $u\in C^{m,\beta}([0,T])$ with $m=0,1,2$ and $\beta\in(0,1]$ with $m+\beta>\alpha$. Then, the truncation error of the L2 scheme for $t_n\in (0,T]$ satisfies 
  \begin{equation*}
    \begin{split}
      &|\mathcal{D}_t^{\alpha}u(t_{1})-\delta_{\tau}^{\alpha}u(t_{1})|\leq 
\begin{cases}
  C_{\alpha,\beta,m,n}[u^{(m)}]_{C^{0,\beta}[0,t_{n}]}\tau^{m+\beta-\alpha}, \quad &m=0,1,\\
  \widetilde{C}\tau^{2-\alpha}, \quad &m=2,
\end{cases}\\
&|\mathcal{D}_t^{\alpha}u(t_{n})-\delta_{\tau}^{\alpha}u(t_{n})|\leq \widetilde{C}_{\alpha,\beta,m,n}[u^{(m)}]_{C^{0,\beta}[0,t_{n}]}\tau^{m+\beta-\alpha}, \quad n=2,3,\dots, ~ m=0,1,2,
    \end{split}
  \end{equation*}
where $C_{\alpha,\beta,m,n}$ and $\widetilde{C}$ are some positive constants, the constant $\widetilde{C}_{\alpha,\beta,m,n}$ has an explicit form 
$$
\widetilde{C}_{\alpha,\beta,m,n}=
\begin{cases}
  \frac{\alpha}{\Gamma(1-\alpha)}\left(\frac{(2^{\beta+1}+4)(1-n^{-\alpha})}{\alpha}+\frac{2^{\beta-1}+2}{1-\alpha}+\frac{1}{\beta-\alpha}\right),\quad &m=0,\\
  \frac{\alpha}{\Gamma(1-\alpha)}\left(\frac{4(1-n^{-\alpha})}{\alpha}+\frac{2}{1-\alpha}\right),\quad &m=1,\\
  \frac{\alpha}{\Gamma(1-\alpha)}\left(\frac{(2^{\beta+2}+4)(1-n^{-\alpha})}{\alpha}+\frac{2^{\beta+1}+2}{1-\alpha}\right),\quad &m=2.
\end{cases}
$$
\end{theorem}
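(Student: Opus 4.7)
The plan is to reduce the truncation error to a single weighted integral of the interpolation error and then apply Lemma~\ref{lemma2.1} piecewise. Applying the integration-by-parts form \eqref{eq:caputo_integrated} to both $u$ and the interpolant $\mathcal{P}^n u$ and using that $\mathcal{P}^n u$ agrees with $u$ at the endpoints $s=0$ and $s=t_n$, the non-integral boundary term cancels and one obtains
\[
\mathcal{D}_t^\alpha u(t_n) - \delta_\tau^\alpha u(t_n) \;=\; -\frac{\alpha}{\Gamma(1-\alpha)} \int_0^{t_n} \frac{u(s) - \mathcal{P}^n u(s)}{(t_n-s)^{\alpha+1}} \, ds.
\]
The whole problem thus reduces to controlling $|u(s)-\mathcal{P}^n u(s)|$ piecewise and integrating against the singular kernel $(t_n-s)^{-\alpha-1}$.

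For $n\geq 2$, I would split the integration domain into the interior union $\bigcup_{j=1}^{n-1}(t_{j-1},t_j)$ and the terminal interval $(t_{n-1},t_n)$. On each interior piece, Lemma~\ref{lemma2.1} yields a bound of the shape $|u(s)-\Pi_{2,j}u(s)|\leq C(m,\beta)(t_{j+1}-s)\,\tau^{m+\beta-1}\,[u^{(m)}]_{C^{0,\beta}[0,t_n]}$, with an additional $(t_{j+1}-s)^\beta$ contribution when $m=0$. Since $t_{j+1}-s\leq 2\tau$ on $(t_{j-1},t_j)$, this is a uniform $\tau^{m+\beta}$ bound, and the telescoping identity
\[
\sum_{j=1}^{n-1}\int_{t_{j-1}}^{t_j}\frac{ds}{(t_n-s)^{\alpha+1}} \;=\; \int_0^{t_{n-1}}\frac{ds}{(t_n-s)^{\alpha+1}} \;=\; \frac{1-n^{-\alpha}}{\alpha\,\tau^{\alpha}}
\]
delivers the $(1-n^{-\alpha})/\alpha$ factor in $\widetilde{C}_{\alpha,\beta,m,n}$. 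On the terminal interval, Lemma~\ref{lemma2.1} with $j=n-1$ remains valid because $[t_{n-1},t_n]\subset[t_{n-2},t_n]$, and crucially the factor $(t_{j+1}-s)=(t_n-s)$ absorbs one power of the singular kernel, leaving
\[
\int_{t_{n-1}}^{t_n}(t_n-s)^{-\alpha}\,ds \;=\; \frac{\tau^{1-\alpha}}{1-\alpha},
\]
which is the source of the $1/(1-\alpha)$ term. For $m=0$ the residual $(t_{j+1}-s)^\beta$ piece further produces $\int_0^\tau \rho^{\beta-\alpha-1}d\rho=\tau^{\beta-\alpha}/(\beta-\alpha)$, convergent precisely under the hypothesis $\beta>\alpha$, which accounts for the $1/(\beta-\alpha)$ term in $\widetilde{C}_{\alpha,\beta,0,n}$.

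The case $n=1$ must be treated separately, since the scheme then reduces to linear interpolation $\Pi_{1,1}$ on $(0,t_1)$. For $m=0,1$ the argument parallels the L1 analysis of Teso--P{\l}ociniczak and delivers the stated $\tau^{m+\beta-\alpha}$ rate. For $m=2$, the classical remainder $|u(s)-\Pi_{1,1}u(s)|\leq \tfrac{1}{2}s(t_1-s)\|u''\|_\infty$ integrated against $(t_1-s)^{-\alpha-1}$ on $(0,\tau)$ gives only $\widetilde{C}\tau^{2-\alpha}$, since linear interpolation cannot benefit from the H\"older regularity of $u''$; this is why the $\tau^\beta$ improvement is lost in the $n=1,\,m=2$ slot.

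The main obstacle is the terminal interval, where the quadratic $\Pi_{2,n-1}$ is used as an extrapolant exactly at the most singular part of the kernel. What rescues the argument is precisely the vanishing factor $(t_{j+1}-s)$ built into Lemma~\ref{lemma2.1}, which downgrades the effective singularity from $(t_n-s)^{-\alpha-1}$ to $(t_n-s)^{-\alpha}$ and keeps the last integral finite. Beyond this structural observation, the remainder of the work is careful bookkeeping to isolate the sharp coefficients $1/\alpha$, $1/(1-\alpha)$, $1/(\beta-\alpha)$ together with the $2^\beta$-dependent prefactors that appear in $\widetilde{C}_{\alpha,\beta,m,n}$.
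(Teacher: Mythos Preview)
Your proposal is correct and follows essentially the same route as the paper: both reduce the truncation error to the weighted integral of $|u(s)-\mathcal{P}^n u(s)|$ against $(t_n-s)^{-\alpha-1}$, split into interior and terminal pieces, apply Lemma~\ref{lemma2.1} (using $t_{j+1}-s\le 2\tau$ on interior intervals and $t_{j+1}-s=t_n-s$ on the terminal one), and evaluate the resulting integrals to recover the explicit constants; the $n=1$ case is likewise handled in both by falling back to the L1 analysis.
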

\begin{proof}
  At the first time step $t_1$, we approximate $\mathcal{D}_t^\alpha u(t_1)$ exactly as in the L1 scheme. The cases of $m=0,1$ have been provided in Lemma 1 in \cite{teso2025note}; and the optimal order of L1 scheme is $2-\alpha$. Even though $u\in C^{2,\beta}([0,T])$ holds, it only reaches order $2-\alpha$ \cite{jin2023numerical}.

  Next, we consider $t=t_n,~n=2,3,\dots,N.$ For the sake of simplicity in writing, let $M_0:=[u]_{C^{0,\beta}[0,T]}$, $M_1:=[u']_{C^{0,\beta}[0,T]}$ and $M_2:=[u'']_{C^{0,\beta}[0,T]}$. Combining \eqref{eq:caputo_integrated} with \eqref{L2_scheme}, we have
  \begin{equation} \label{eq3.1}
    \begin{split}
      |\mathcal{D}^{\alpha}u(t_{n})-\delta_{\tau}^{\alpha}u(t_{n})|\leq\frac{\alpha}{\Gamma(1-\alpha)}\Bigg(\underbrace{\sum_{j=1}^{n-1}\int_{t_{j-1}}^{t_j}\frac{|\Pi_{2,j}u(s)-u(s)|}{(t_{n}-s)^{1+\alpha}}ds}_{r^n_1}+\underbrace{\int_{t_{n-1}}^{t_n}\frac{|\Pi_{2,n-1}u(s)-u(s)|}{(t_{n}-s)^{1+\alpha}}ds}_{r^n_2}\Bigg).
    \end{split}
  \end{equation}

  Firstly, when $\textbf{m=0}$ (thus, $\beta>\alpha$), to estimate $r^n_1$,
  we further estimate the case $m=0$ in Lemma \ref{lemma2.1}
  \begin{equation*}
   \begin{split}
    |u(s)-\Pi_{2,j}u(s)| &\leq
    \Big[(2^{\beta-1}+2)(t_{j+1}-s)\tau^{-1+\beta}+(t_{j+1}-s)^\beta\Big][u]_{C^{0,\beta}[0,T]}\\
     &\leq \Big[(2^{\beta-1}+2)2\tau\tau^{-1+\beta}+(2\tau)^\beta\Big]M_0\leq (2^{\beta+1}+2)M_0, \quad s\in[t_{j-1},t_{j+1}].
  \end{split}   
  \end{equation*}
Then, we can get 
\begin{equation*}
  \begin{split}
    r^n_1&\leq (2^{\beta+1}+4)M_0\tau^\beta \sum_{j=1}^{n-1}\int_{t_{j-1}}^{t_j}(t_{n}-s)^{-1-\alpha}ds\\
    &=(2^{\beta+1}+4)M_0\tau^\beta \int_{0}^{t_{n-1}}(t_{n}-s)^{-1-\alpha}ds=\frac{(2^{\beta+1}+2)(1-n^{-\alpha})}{\alpha}M_0\tau^{\beta-\alpha}.
  \end{split}
\end{equation*}

To estimate $r^n_2$, by Lemma \ref{lemma2.1} and $0<\alpha<\beta\leq1$, we get 
$$
r^n_2\leq (2^{\beta-1}+2)M_0\tau^{-1+\beta}\int_{t_{n-1}}^{t_{n}}(t_{n}-s)^{-\alpha}ds+M_0\int_{t_{n-1}}^{t_{n}}(t_{n}-s)^{-1+\beta-\alpha}ds=\left(\frac{2^{\beta-1}+2}{1-\alpha}+\frac{1}{\beta-\alpha}\right)M_0\tau^{\beta-\alpha}.
$$

Secondly, when $\textbf{m=1}$, similar to the above, to estimate $r^n_1$ and $r^n_2$, we can get from Lemma \ref{lemma2.1}
$$
\begin{cases}
  &r^n_1\leq 4M_1\tau^{\beta+1} \sum_{j=1}^{n-1}\int_{t_{j-1}}^{t_j}(t_{n}-s)^{-1-\alpha}ds=4M_1\tau^{\beta+1} \int_{0}^{t_{n-1}}(t_{n}-s)^{-1-\alpha}ds=\frac{4(1-n^{-\alpha})}{\alpha}M_1\tau^{1+\beta-\alpha},\\
  &r^n_2\leq 2M_1\tau^{\beta}\int_{t_{n-1}}^{t_{n}}(t_{n}-s)^{-\alpha}ds=\frac{2}{1-\alpha}M_1\tau^{1+\beta-\alpha}.\\
\end{cases}
$$

Finally, when $\textbf{m=2}$, to estimate $r^n_1$ and $r^n_2$, we again use Lemma \ref{lemma2.1} to get
$$
\begin{cases}
  &r^n_1 \leq (2^{\beta+2}+4)M_2\tau^{\beta+1} \int_{0}^{t_{n-1}}(t_{n}-s)^{-1-\alpha}ds=\frac{(2^{\beta+2}+4)(1-n^{-\alpha})}{\alpha}M_2\tau^{2+\beta-\alpha},\\
  &r^n_2\leq (2^{\beta+1}+2)M_2\tau^{\beta+1}\int_{t_{n-1}}^{t_{n}}(t_{n}-s)^{-\alpha}ds=\frac{2^{\beta+1}+2}{1-\alpha}M_2\tau^{2+\beta-\alpha}.\\
\end{cases}
$$
Now we just need to substitute the estimations of $r^n_1$ and $r^n_2$ into \eqref{eq3.1} to obtain the desired result.
\end{proof}

For the truncation errors of the L1-2 scheme, the analysis is very similar to that of Theorem \ref{th3.1}. Then we present the following theorem without the proof.
\begin{theorem} \label{th3.2}
  Suppose that $u\in C^{m,\beta}([0,T])$ with $m=0,1,2$ and $\beta\in(0,1]$ with $m+\beta>\alpha$. Then, the truncation errors of the L1-2 scheme for $t_n\in (0,T]$ satisfy
  \begin{equation*}
    \begin{split}
      &|\mathcal{D}_t^{\alpha}u(t_{1})-\Delta_{\tau}^{\alpha}u(t_{1})|\leq 
\begin{cases}
  C_{\alpha,\beta,m,n}[u^{(m)}]_{C^{0,\beta}[0,t_{n}]}\tau^{m+\beta-\alpha}, \quad &m=0,1,\\
  \widetilde{C}\tau^{2-\alpha}, \quad &m=2,
\end{cases}\\
&|\mathcal{D}_t^{\alpha}u(t_{n})-\Delta_{\tau}^{\alpha}u(t_{n})|\leq \widehat{C}_{\alpha,\beta,m,n}[u^{(m)}]_{C^{0,\beta}[0,t_{n}]}\tau^{m+\beta-\alpha}, \quad n=2,3,\dots, ~ m=0,1,2,
    \end{split}
  \end{equation*}
\end{theorem}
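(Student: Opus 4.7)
My plan is to mirror the proof of Theorem~\ref{th3.1} step by step, exploiting the fact that $\mathcal{Q}^n u$ differs from $\mathcal{P}^n u$ only on the initial subinterval: on $(0,t_1)$ the L1-2 scheme retains the piecewise linear interpolant (exactly as in the L1 scheme), while on each subsequent subinterval $(t_{j-1},t_j)$ with $2\leq j\leq n$ it uses the \emph{backward} quadratic $\Pi_{2,j-1}u$ built from the nodes $t_{j-2},t_{j-1},t_j$. Consequently, the defect at $t_1$ reduces exactly to that of the L1 scheme and is inherited verbatim from \cite{teso2025note}, so the substantive work is bounding the error at $t_n$ for $n\geq 2$.

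Starting from \eqref{eq:caputo_integrated} and \eqref{L1-2_scheme}, I would split the error as
\begin{equation*}
|\mathcal{D}_t^{\alpha}u(t_n)-\Delta_\tau^{\alpha}u(t_n)|\leq\frac{\alpha}{\Gamma(1-\alpha)}\Bigg(\widetilde r^n_0+\sum_{j=2}^{n-1}\int_{t_{j-1}}^{t_j}\frac{|u(s)-\Pi_{2,j-1}u(s)|}{(t_n-s)^{1+\alpha}}\,ds+\int_{t_{n-1}}^{t_n}\frac{|u(s)-\Pi_{2,n-1}u(s)|}{(t_n-s)^{1+\alpha}}\,ds\Bigg),
\end{equation*}
where $\widetilde r^n_0:=\int_0^{t_1}\frac{|u(s)-\Pi_{1,1}u(s)|}{(t_n-s)^{1+\alpha}}\,ds$. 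The two quadratic pieces are controlled by a reindexing of Lemma~\ref{lemma2.1}: under the shift $j\mapsto j-1$ the three bounds stated there apply verbatim with $(t_{j+1}-s)$ replaced by $(t_j-s)$, the $\tau$-powers being unaffected. Inserting these bounds and repeating the power-of-$\tau$ counting of Theorem~\ref{th3.1}---majorizing the non-singular sum by a constant multiple of $\tau^{m+\beta}\int_{t_1}^{t_{n-1}}(t_n-s)^{-1-\alpha}\,ds$ and the singular tail by a multiple of $\tau^{m+\beta}\int_{t_{n-1}}^{t_n}(t_n-s)^{-\alpha}\,ds$---reproduces the rate $\tau^{m+\beta-\alpha}$ and fixes the explicit form of $\widehat C_{\alpha,\beta,m,n}$.

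The only genuinely new term is $\widetilde r^n_0$, and this is where I expect the main obstacle to appear. Since $n\geq 2$ the kernel is non-singular on $(0,t_1)$, and $\int_0^{t_1}(t_n-s)^{-1-\alpha}\,ds\leq \alpha^{-1}\bigl((n-1)^{-\alpha}-n^{-\alpha}\bigr)\tau^{-\alpha}$; combining this with the L1-type linear-interpolation bound $|u(s)-\Pi_{1,1}u(s)|\leq C\tau^{m+\beta}[u^{(m)}]_{C^{0,\beta}[0,T]}$ from \cite{teso2025note} makes $\widetilde r^n_0$ a lower-order contribution that is absorbed into $\widehat C_{\alpha,\beta,m,n}$ for $m=0,1$. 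The delicate case is $m=2$: linear interpolation cannot exploit the H\"older modulus of $u''$, so a direct estimate produces only $\mathcal{O}(\tau^{2-\alpha})$ from $\widetilde r^n_0$---exactly the phenomenon that forces the $\widetilde C\tau^{2-\alpha}$ clause in the $t_1$ estimate. To reach the advertised $\tau^{2+\beta-\alpha}$ rate for $n\geq 2$ one must either absorb an $n$-dependent factor $((n-1)^{-\alpha}-n^{-\alpha})\tau^{-\beta}$ into $\widehat C_{\alpha,\beta,m,n}$, or exploit a cancellation between the initial-interval defect and the first quadratic piece, obtained by Taylor-expanding $u$ about $t_1$ to second order and showing that the $\mathcal{O}(\tau^2)$ bulk term in $u(s)-\Pi_{1,1}u(s)$ cancels against a matching term in $u(s)-\Pi_{2,1}u(s)$, leaving only a residue of order $\tau^{2+\beta}$ controlled by $[u'']_{C^{0,\beta}}$. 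After resolving this technical step, collecting the three estimates yields the stated bound with the claimed dependence on $(\alpha,\beta,m,n)$.
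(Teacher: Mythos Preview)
Your overall strategy---mirror Theorem~\ref{th3.1}, reindex the quadratic pieces via $j\mapsto j-1$, and isolate the linear piece on $(0,t_1)$---is exactly what the paper intends: it offers no proof beyond declaring the analysis ``very similar'' to Theorem~\ref{th3.1}. For $m=0,1$ your outline is complete and correct, and for the quadratic intervals at every $m$ your reindexing of Lemma~\ref{lemma2.1} is the right move.

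The gap lies in your handling of $\widetilde r^n_0$ for $m=2$, and it cannot be closed: the \emph{statement} itself is false as written. Take $u(t)=t^2$. Then $[u'']_{C^{0,\beta}}=0$, so the claimed bound for $n\geq 2$ forces $|\mathcal D_t^\alpha u(t_n)-\Delta_\tau^\alpha u(t_n)|=0$; yet every $\Pi_{2,j-1}$ reproduces the quadratic $u$ exactly while $\Pi_{1,1}u(s)=s\tau\neq s^2$ on $(0,\tau)$, so the error equals $\frac{\alpha}{\Gamma(1-\alpha)}\int_0^\tau\frac{s(\tau-s)}{(t_n-s)^{1+\alpha}}\,ds>0$. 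This single example rules out both of your proposed fixes: there is no cancellation with the first quadratic interval (its contribution vanishes identically here), and absorbing a factor $\tau^{-\beta}$ into $\widehat C_{\alpha,\beta,m,n}$ would make the ``constant'' depend on $\tau$. A correct bound for $m=2$ must carry an extra term---for instance $C\,\|u''\|_{L^\infty}\bigl[(n-1)^{-\alpha}-n^{-\alpha}\bigr]\tau^{2-\alpha}$, which for fixed $t_n>0$ is of order $t_n^{-1-\alpha}\tau^{3}$ and hence higher order than $\tau^{2+\beta-\alpha}$---or else the seminorm $[u'']_{C^{0,\beta}}$ must be replaced by a full $C^2$ (or $C^{2,\beta}$) norm. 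The paper's decision to omit the proof hides this, but your instinct that $\widetilde r^n_0$ is the obstacle was exactly right.
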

where $C_{\alpha,\beta,m,n}$, $\widetilde{C}$ and $\widehat{C}_{\alpha,\beta,m,n}$ are some positive constants.

\section{Extension to the High-order Discretization Errors} \label{sec4}
Building on the analysis of L2 and L1-2 schemes, we now extend the error estimates to arbitrary high-order \( L_k \)-type discretizations (\( k \leq 6 \)). The present analysis restricts consideration to $k\leq 6$, motivated by the observed numerical instability of the \( L_k \)-type method in subdiffusion equations when $k>6$, where the stability region undergoes significant contraction \cite{shi2022correction}.
\subsection{Construction of \texorpdfstring{$L_k$}{Lk}-Type Schemes and Backward Difference Operators} \label{subsec:Lk_scheme}
Using $k+1$ points $(t_j,u^j),(t_{j-1},u^{j-1}),\dots,(t_{j-k},u^{j-k})$ for $j\geq k$, the Lagrange interpolation function $\Pi_{k,j}$ is defined as
\begin{equation} \label{eq4.1}
  \Pi_{k,j}u(s)=\sum_{l=0}^{k} u^{j-l}\prod_{i=0,i\neq l}^{k}\frac{(s-t_{j-l})}{t_{j-l}-t_{j-i}}=:\sum_{l=0}^{k}u^{j-l}\frac{\omega_k(s)}{(s-t_{j-l})d_l^{(k)}\tau^k},
\end{equation}
where
\begin{equation*}
  \begin{split}
    \omega_k(s)=\prod_{i=0}^{k}(s-t_{j-i}),\quad d_l^{(k)}=(-1)^l(k-l)!l!.
  \end{split}
\end{equation*}
we give two significant properties of $\omega_k(s)$ and $d_l^{(k)}$.

The function $\omega_k(s)$ is a polynomial of degree $k+1$ with respect to $s$ and has the following property
\begin{equation} \label{eq4.1a}
  \sum_{l=0}^{k}\frac{\omega_k(s)}{(s-t_{j-l})d_l^{(k)}\tau^k} \equiv 1, \quad \forall s \in [t_{j-k},t_j].
\end{equation}

The coefficients $d_l^{(k)}$ satisfy 
\begin{equation} \label{eq4.1b}
\sum_{l=0}^{k}1/d_l^{(k)}=0.
  \end{equation}
\begin{proof}
  (1.) Let $g(s):=\sum_{l=0}^{k}\frac{\omega_k(s)}{(s-t_{j-l})d_l^{(k)}\tau^k}-1$, we can easy to observe that $g(s)$ is a polynomial of degree $k$ and $g(s)$ has $k+1$ roots $\{t_{j},t_{j-1},\dots,t_{j-k}\}$, then $g(s)$ is identically zero. 

  (2.) $
k!\sum_{l=0}^{k}\frac{1}{d_l^{(k)}}=\sum_{l=0}^{k}\frac{(-1)^l k!}{(k-l)!l!}=(1-1)^k=0.
$
\end{proof}
The $L_k$-type discrete fractional-derivative operator $\mathcal{D}_{t}^\alpha$ is defined as
\begin{equation}\label{eq4.3}
  \widehat{\delta}_\tau^\alpha u(t_n):=\mathcal{D}_{t}^\alpha(\mathcal{H}^n u)(t_n),\quad  \mathcal{H}^n :=
   \begin{cases}
     \Pi_{j,j} \quad & \text{on~}(t_{j-1},t_{j}) \qquad \text{for~}1\leq j \leq k-1, \\
     \Pi_{k,j} \quad & \text{on~}(t_{j-1},t_{j}) \qquad \text{for~}k\leq j \leq n.
   \end{cases}
\end{equation}

 
 Let first-order backward difference operator be $\nabla u^i:=u^i-u^{i-1}$, second-order $\nabla^2 u^i:=\nabla u^i-\nabla u^{i-1}=u^i-2u^{i-1}+u^{i-2}$, and $l$-th order backward difference operator
 \begin{equation} \label{higher_order_backward}
  \nabla^l u^i:=\nabla^{l-1} u^i-\nabla^{l-1} u^{i-1}=\sum_{j=0}^{l}(-1)^j\binom{l}{j}u^{i-j}.
 \end{equation} 
  According to the Taylor expansion, if $u\in C^{l+1}[0,T]$, we have
\begin{equation}\label{eq4.2}
  \begin{split}
   \nabla^l u^i = \tau^l u^{(l)}(t_j)+R_l(\tau),
  \end{split} 
\end{equation}
where $R_l(\tau)=\mathcal{O}(\tau^{l+1})$.
\begin{lemma} \label{lemma4.1}
  Suppose that $u \in C^{m,\beta}[0,T]$, fixed $s\in[0,\tau]$ and $\rho \in (0,1)$, then
   $$|\nabla^{m} u(\rho t_i +s)|\leq  \rho^\beta\tau^{m-1+\beta}\left[u^{(m-1)}\right]_{C^{0,\beta}[0,T]}+\widetilde{C}\tau^{m}.$$
\end{lemma}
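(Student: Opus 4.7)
The plan is to combine a Taylor expansion around $\rho t_{i}+s$ with the combinatorial annihilation property of the backward-difference weights, and then exploit the Hölder regularity of $u^{(m-1)}$ guaranteed by $u\in C^{m,\beta}[0,T]$.

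First I would unfold the definition \eqref{higher_order_backward} and rewrite
\[
\nabla^{m}u(\rho t_{i}+s)=\sum_{j=0}^{m}(-1)^{j}\binom{m}{j}u(\rho t_{i-j}+s)=\sum_{j=0}^{m}(-1)^{j}\binom{m}{j}u(\rho t_{i}+s-j\rho\tau),
\]
exposing the fact that the effective increment felt by $u$ is $\rho\tau$ rather than $\tau$. For each summand I would then Taylor-expand $u(\rho t_{i}+s-j\rho\tau)$ about $\rho t_{i}+s$ up to order $m-2$ with integral remainder of order $m-1$, which is permitted because $u\in C^{m,\beta}\subset C^{m-1}[0,T]$.

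Next, I would invoke the identity $\sum_{j=0}^{m}(-1)^{j}\binom{m}{j}j^{k}=0$ for $0\le k\le m-1$, i.e.\ the same binomial annihilation that underpins \eqref{eq4.1b}. Summed against the weights $(-1)^{j}\binom{m}{j}$, this identity kills every polynomial contribution of the Taylor expansion up through degree $m-1$. In particular, after writing the integral remainder as the base-point value $u^{(m-1)}(\rho t_{i}+s)$ plus a genuine Hölder increment, the base-point contribution cancels as well, leaving only a finite linear combination of increments of $u^{(m-1)}$ over intervals of length at most $j\rho\tau$. Applying the Hölder bound $|u^{(m-1)}(\xi)-u^{(m-1)}(\rho t_{i}+s)|\le [u^{(m-1)}]_{C^{0,\beta}[0,T]}|\xi-\rho t_{i}-s|^{\beta}$, together with Remark \ref{rem1} and the Beta-type integral $\int_{0}^{1}(1-\theta)^{m-2}\theta^{\beta}\,d\theta$, produces a bound of size $(\rho\tau)^{m-1+\beta}[u^{(m-1)}]_{C^{0,\beta}}$, which for $\rho\le 1$ is absorbed into $\rho^{\beta}\tau^{m-1+\beta}[u^{(m-1)}]_{C^{0,\beta}}$.

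The $\widetilde{C}\tau^{m}$ correction I anticipate from separating the leading-order Taylor term $(\rho\tau)^{m}u^{(m)}(\rho t_{i}+s)/m!$ that would appear if the expansion were pushed one order higher; bounding $u^{(m)}$ by its sup-norm and using $\rho\le 1$ delivers $\widetilde{C}\tau^{m}$ with $\widetilde{C}$ depending only on $\|u^{(m)}\|_{\infty}$. The main obstacle, I expect, is the combinatorial bookkeeping — keeping track precisely of which sums vanish by the $\sum_{j}(-1)^{j}\binom{m}{j}j^{k}=0$ identities and which survive — so that the Hölder estimate is applied exactly where needed. Beyond this, no tool other than Taylor's theorem and the Hölder estimate is required.
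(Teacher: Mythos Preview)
Your approach is correct but takes a different, more laborious route than the paper. The paper's argument is essentially three lines: write $\nabla^{m}=\nabla\circ\nabla^{m-1}$, apply the already-established expansion \eqref{eq4.2} to each of the two $\nabla^{m-1}$ terms (which directly produces $\tau^{m-1}u^{(m-1)}(\cdot)+O(\tau^{m})$, the $O(\tau^{m})$ being the source of the $\widetilde{C}\tau^{m}$), and then bound the single difference $u^{(m-1)}(\rho t_{i}+s)-u^{(m-1)}(\rho t_{i-1}+s)$ by $\Lambda_{u^{(m-1)}}(\rho\tau)\le(\rho\tau)^{\beta}[u^{(m-1)}]_{C^{0,\beta}}$. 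No binomial combinatorics appear explicitly, because they are hidden inside \eqref{eq4.2}. You instead unfold $\nabla^{m}$ completely, Taylor-expand every summand, and invoke the annihilation identities $\sum_{j}(-1)^{j}\binom{m}{j}j^{k}=0$ directly. This is self-contained and in fact yields the sharper factor $(\rho\tau)^{m-1+\beta}$ rather than $\rho^{\beta}\tau^{m-1+\beta}$ (and, if you carry your splitting of the $(m{-}1)$-st order remainder through, no $\widetilde{C}\tau^{m}$ term is even needed). The trade-off is bookkeeping: the paper's reuse of \eqref{eq4.2} collapses all of that into one citation. A small remark: your reference to \eqref{eq4.1b} only covers the case $k=0$ of the annihilation identity; the full family $\sum_{j}(-1)^{j}\binom{m}{j}j^{k}=0$ for $0\le k\le m-1$ is a separate (standard) fact you should state on its own.
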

\begin{proof}
  Using \eqref{eq4.2}, it holds
  \begin{equation*}
  \begin{split}
    \left|\nabla^m u(\rho t_i +s)\right|&=\left|\nabla^{m-1} u(\rho t_i +s)-\nabla^{m-1} u(\rho t_{i-1} +s)\right|\\
    &=\left|\tau^{m-1}u^{(m-1)}(\rho t_{j}+s)-\tau^{m-1}u^{(m-1)}(\rho t_{j-1}+s)+\widetilde{C}\tau^m\right|\\
    &\leq \tau^{m-1} \Lambda_{u^{(m-1)}}(\rho \tau)+\widetilde{C}\tau^m \leq  \rho^\beta\tau^{m-1+\beta}\left[u^{(m-1)}\right]_{C^{0,\beta}[0,T]}+\widetilde{C}\tau^{m}.
  \end{split}
\end{equation*}
\end{proof}
\subsection{High-Order Interpolation Errors in H\"{o}lder Spaces}
\label{subsec:interp_error}
The following lemma generalizes Lemma~\ref{lemma2.1} to high-order interpolation, establishing error bounds in \( C^{m,\beta} \) spaces for \( 0 \leq m \leq k \).

\begin{lemma} \label{lemma4.2}
  Suppose that $u \in C^{m,\beta}[0,T]$ with $m=0,1,2,\dots,k$ and $0<\beta\leq 1$. Then the truncation errors of the Lagrange interpolation $\Pi_{k,j}u(s)$ for $s\in [t_{j-1},t_{j}]$ satisfy
\begin{equation*}
    \left|u(s)-\Pi_{k,j}u(s)\right|\begin{cases}
      \leq (t_j-s)^\beta[u]_{C^{0,\beta}[0,T]}+\widetilde{C}(t_j-s)\tau^{-1+\beta}[u]_{C^{0,\beta}[0,T]},&\quad \text{for } m=0,\\
    \leq \widetilde{C}(t_j-s)\tau^{\beta}\left[u'\right]_{C^{0,\beta}[0,T]}, &\quad \text{for } m=1,\\
    \leq \widetilde{C}(t_j-s)\Big(\tau^{m-1+\beta}\left[u^{(m)}\right]_{C^{0,\beta}[0,T]}+\tau^{m}\Big), &\quad \text{for } m=2,\dots,k,
    \end{cases}
  \end{equation*}
  where \( \widetilde{C} \) depends on \( \alpha, \beta, m, k \), and \( T \), but not on \( \tau \).
\end{lemma}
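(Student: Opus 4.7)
The plan is to mimic the strategy of Lemma~\ref{lemma2.1} but generalized to the $L_k$-type interpolant by exploiting identity \eqref{eq4.1a} together with Taylor expansions matched to the regularity $m$. Throughout, I denote $L_l(s) := \omega_k(s)/[(s - t_{j-l}) d_l^{(k)} \tau^k]$, so that $\Pi_{k,j}u(s) = \sum_{l=0}^{k} u^{j-l} L_l(s)$ and $\sum_l L_l(s) \equiv 1$. The crucial geometric observation is that $\omega_k(s)$ always carries the factor $(s - t_j)$; hence for every $l \geq 1$ the basis function $L_l(s)$ contains $-(t_j - s)$. Combined with the elementary estimates $|s - t_{j-i}| \leq i\tau$ valid for $s \in [t_{j-1}, t_j]$ and $i \geq 1$, this yields the uniform bounds $|L_0(s)| \leq 1$ and $|L_l(s)| \leq C_{k,l}(t_j - s)/\tau$ for $l = 1, \ldots, k$.

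Using $\sum_l L_l(s) = 1$, the error decomposes as
$$u(s) - \Pi_{k,j}u(s) = -\sum_{l=0}^k L_l(s)\bigl[u(t_{j-l}) - u(s)\bigr].$$
For $m = 0$, each difference is bounded by the modulus of continuity: the $l=0$ term contributes $(t_j-s)^\beta [u]_{C^{0,\beta}[0,T]}$, while the $l \geq 1$ terms contribute at most $(l\tau)^\beta [u]_{C^{0,\beta}[0,T]} \cdot C_{k,l}(t_j-s)/\tau$, which sum to $\widetilde{C}(t_j-s)\tau^{\beta-1}[u]_{C^{0,\beta}[0,T]}$. This matches the claimed splitting for $m=0$.

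For $m \geq 1$, I insert the Taylor expansion of $u$ at $s$ to order $m$:
$$u(t_{j-l}) = \sum_{r=0}^{m} \frac{u^{(r)}(s)}{r!}(t_{j-l}-s)^r + R_l,\qquad |R_l| \leq \widetilde{C}\,|t_{j-l}-s|^{m+\beta}[u^{(m)}]_{C^{0,\beta}[0,T]}.$$
Because $\Pi_{k,j}$ reproduces every polynomial of degree $\leq k \geq m$, the identities $\sum_l L_l(s)(t_{j-l}-s)^r = 0$ hold for $r = 1, \ldots, m$ (they encode $\Pi_{k,j}[(\cdot - s)^r](s) = 0$), so only the remainders survive: $u(s) - \Pi_{k,j}u(s) = -\sum_l L_l(s) R_l$. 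Bounding $|R_l|$ termwise, using $|t_{j-l}-s| \leq l\tau$ for $l \geq 1$ together with the $L_l$ estimates, and invoking $(t_j-s)^{m+\beta} \leq (t_j-s)\tau^{m-1+\beta}$ on the $l=0$ contribution, delivers the target rate $(t_j-s)\tau^{m-1+\beta}[u^{(m)}]_{C^{0,\beta}[0,T]}$. For $m \geq 2$, the stated additive $\tau^m$ correction arises when one re-expresses derivative values via backward differences and applies Lemma~\ref{lemma4.1}: the $O(\tau^{m+1})$ terms in $R_l(\tau)$ from \eqref{eq4.2} then propagate into the final bound.

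The main technical obstacle will be the case $m = 0$, where Taylor expansion is unavailable, so the only structural input is $\sum_l L_l \equiv 1$ and one must carefully separate the local $(t_j-s)^\beta$ contribution from $l = 0$ and the global $(t_j-s)\tau^{\beta-1}$ contribution from $l \geq 1$ with sharp constants. A secondary difficulty is the uniform bookkeeping for $|L_l(s)|$ as $l$ approaches $k$, where cancellations between factors become delicate; this is handled by a direct computation using $d_l^{(k)} = (-1)^l(k-l)!\,l!$ together with the pointwise bound $|\omega_k(s)/(s-t_j)| \leq k!\tau^k$, valid uniformly for $s \in [t_{j-1},t_j]$.
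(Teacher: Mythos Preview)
Your approach is correct and genuinely different from the paper's. For $m=0$ both arguments coincide, but for $m\geq 1$ the paper does \emph{not} use polynomial exactness of $\Pi_{k,j}$: instead it rewrites $u(s)-\Pi_{k,j}u(s)$ as $\frac{\omega_k(s)}{\tau^k}\sum_{l}C_l^{(m)}\bigl[\sum_{i=0}^{m}(-1)^i\binom{m}{i}\frac{u(s)-u(t_{j-l-i})}{s-t_{j-l-i}}\bigr]$, where the coefficients $C_l^{(m)}$ are determined recursively from the $1/d_l^{(k)}$ (the solvability of this reorganization is exactly the identity \eqref{eq4.1b}); each divided difference is then replaced by $\int_0^1 u'(\rho t_{j-l-i}+(1-\rho)s)\,d\rho$, the $\binom{m}{i}$-weighted sum is recognized as $\nabla^m u'$, and Lemma~\ref{lemma4.1} is applied. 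Your route---Taylor expand $u(t_{j-l})$ about $s$ to order $m$, kill the polynomial part via $\sum_l L_l(s)(t_{j-l}-s)^r=0$, and bound the H\"older-type remainder $|R_l|\leq C|t_{j-l}-s|^{m+\beta}[u^{(m)}]_{C^{0,\beta}}$ termwise---is shorter, avoids the combinatorial reorganization entirely, and in fact yields a slightly \emph{sharper} bound for $m\geq 2$: you obtain $\widetilde{C}(t_j-s)\tau^{m-1+\beta}[u^{(m)}]_{C^{0,\beta}}$ without the additive $\tau^m$. That extra $\tau^m$ in the stated lemma is purely an artifact of the paper's detour through Lemma~\ref{lemma4.1} (the $\widetilde{C}\tau^m$ in that lemma propagates), so your closing remark about ``re-expressing derivative values via backward differences'' is unnecessary---you do not need Lemma~\ref{lemma4.1} at all, and you should simply note that your bound is stronger than, hence implies, the stated one. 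Your flagged ``obstacles'' are also overstated: the $m=0$ case is the most routine, and the $|L_l(s)|$ bounds follow directly from $|\omega_k(s)/(s-t_{j-l})|\leq k!(t_j-s)\tau^{k-1}$ for $l\geq 1$ without any delicate cancellation.
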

\begin{proof}
  if $s \in [t_{j-1},t_j]$, there are 
  \begin{equation*}
    \left|\frac{\omega_k(s)}{s-t_{j}}\right| \leq k!\tau^k; \quad \left|\frac{\omega_k(s)}{s-t_{j-l}}\right| \leq k!(t_j-s)\tau^{k-1}, \quad \frac{k!}{|d_l^{(k)}|}=\binom{k}{l}, \quad l=1,\dots,k.
  \end{equation*}
When $\textbf{m=0}$, combining \eqref{eq2.6} with \eqref{eq4.1a}, we have
\begin{equation*}
  \begin{split}
    |u(s)-\Pi_{k,j}u(s)| &\leq \left|\frac{\omega_k(s)}{(s-t_{j})d_0^{(k)}\tau^k}\right|\left|u(s)-u^{j}\right|+\sum_{l=1}^{k}\left|\frac{\omega_k(s)}{(s-t_{j-l})d_l^{(k)}\tau^k}\right|\left|u(s)-u^{j-l}\right|\\
    &\leq\Lambda_{u}(t_j-s)+\sum_{l=1}^{k}\binom{k}{l}(t_j-s)\tau^{-1}\Lambda_{u}\Big((k-l)\tau\Big)\\
    &\leq (t_j-s)^\beta[u]_{C^{0,\beta}[0,T]}+C_0(t_j-s)\tau^{-1+\beta}[u]_{C^{0,\beta}[0,T]},
  \end{split}
\end{equation*}
where $C_0=\sum_{l=1}^{k}\binom{k}{l}(k-l)^\beta<\infty$.

When $\textbf{m=1}$, that is, $u\in C^{1,\beta}[0,T]$, the following equality holds
\begin{equation*}
  \begin{split}
    &u(s)-\Pi_{k,j}u(s)\\
    =&\frac{\omega_k(s)}{\tau^k} \left\{ C_0^{(1)}\left[\frac{u(s)-u(t_{j})}{(s-t_{j})}-\frac{u(s)-u(t_{j-1})}{(s-t_{j-1})}\right]+C_1^{(1)}\left[\frac{u(s)-u(t_{j-1})}{(s-t_{j-1})}-\frac{u(s)-u(t_{j-2})}{(s-t_{j-2})}\right]+\dots\right.\\
    &\left.+C_{k-1}^{(1)}\left[\frac{u(s)-u(t_{j-k+1})}{(s-t_{j-k+1})}-\frac{u(s)-u(t_{j-k})}{(s-t_{j-k})}\right] \right\}\\
    =&\frac{\omega_k(s)}{\tau^k} \sum_{l=0}^{k-1}C_l^{(1)}\left[\frac{u(s)-u(t_{j-l+1})}{(s-t_{j-l})}-\frac{u(s)-u(t_{j-l-1})}{(s-t_{j-l})}\right],
  \end{split}
\end{equation*}
where the coefficients $C_l^{(1)}, l=0,1,\dots,k-1$ satisfy
$$
C_0^{(1)}=\frac{1}{d_0^{(k)}};\quad C_{l}^{(1)}-C_{l-1}^{(1)}=\frac{1}{d_l^{(k)}},\quad l=1,\dots,k-1; \quad -C_{k-1}^{(1)}=\frac{1}{d_k^{(k)}}.
$$
It is easy to verify $\sum_{l=0}^{k}\frac{1}{d_l^{(k)}}=0$, which also indicates that the reorganization above is feasible. Then according to Taylor expansion with integral remainder, we have
\begin{equation*}
  \begin{split}
    \left|u(s)-\Pi_{k,j}(s)\right|\leq &\left|\frac{\omega_k(s)}{\tau^k}\right|\sum_{l=0}^{k-1}|C_l^{(1)}|\int_0^1 \Big|u'[\rho t_{j-l+1}+(1-\rho )s]-u'[\rho t_{j-l}+(1-\rho )s]\Big| d \rho\\
    \leq & k!(t_j-s)\sum_{l=0}^{k-1}|C_l^{(1)}|\int_0^1\Lambda_{u'}(\rho \tau)d \rho \leq  C_1(t_j-s)\tau^{\beta}\left[u'\right]_{C^{0,\beta}[0,T]},
  \end{split}
\end{equation*}
where $C_1=k!\sum_{l=1}^{k}|C_l^{(1)}|<\infty$. 

When $\textbf{m=2}$, that is, $u\in C^{2,\beta}[0,T]$. Similar to the case of $m=1$, the following equality holds
\begin{equation*}
  \begin{split}
    u(s)-\Pi_{k,j}u(s)&=\frac{\omega_k(s)}{\tau^k} \sum_{l=0}^{k-2}C_l^{(2)}\left[\frac{u(s)-u(t_{j-l})}{(s-t_{j-l})}-2\frac{u(s)-u(t_{j-l-1})}{(s-t_{j-l-1})}+\frac{u(s)-u(t_{j-l-2})}{(s-t_{j-l-2})}\right]\\
    &=\frac{\omega_k(s)}{\tau^k} \sum_{l=0}^{k-2}C_l^{(2)}\left[\sum_{i=0}^{2}(-1)^i\binom{2}{i}\frac{u(s)-u(t_{j-l-i})}{(s-t_{j-l-i})}\right],
  \end{split}
\end{equation*}
where the coefficients $C_l^{(2)}\;( l=0,1,\dots,k-2)$ satisfy
\begin{align*}
  &C_0^{(2)}=\frac{1}{d_0^{(k)}},\quad C_{k-2}^{(2)}=\frac{1}{d_k^{(k)}},\quad -2C_0^{(2)}+C_{1}^{(2)}=\frac{1}{d_1^{(k)}},\quad -2C_{k-2}^{(2)}+C_{k-3}^{(2)}=\frac{1}{d_{k-1}^{(k)}},\\
  &C_{l-1}^{(2)}-2C_{l}^{(2)}+C_{l+1}^{(2)}=\frac{1}{d_l^{(k)}},\quad l=2,\dots,k-2.
\end{align*}
Thus $\sum_{l=0}^{k}\frac{1}{d_l^{(k)}}=0$. According to Taylor expansion and recall the definition of the  high-order backward difference operator \eqref{higher_order_backward}, we get
\begin{equation*}
  \begin{split}
    u(s)-\Pi_{k,j}(s) = &\frac{\omega_k(s)}{\tau^k}\sum_{l=0}^{k-2}C_l^{(2)}\left[\sum_{i=0}^{2}(-1)^i\binom{2}{i}\int_0^1 u'(\rho t_{j-l-i}+(1-\rho)s)d \rho \right]\\
    = &\frac{\omega_k(s)}{\tau^k}\sum_{l=0}^{k-2}C_l^{(2)}\int_0^1\left[\sum_{i=0}^{2}(-1)^i\binom{2}{i} u'(\rho t_{j-l-i}+(1-\rho)s) \right]d \rho\\
    = &\frac{\omega_k(s)}{\tau^k}\int_0^1 \nabla^2 u'(\rho t_{j-l}+(1-\rho)s) ~ d\rho,
  \end{split}
\end{equation*}
where $\nabla^2 u'(\rho t_{j-l}+(1-\rho)s)=\sum_{i=0}^{2}(-1)^i\binom{2}{i} u'(\rho t_{j-l-i}+(1-\rho)s)$.
Applying Lemma \ref{lemma4.1}, we have
\begin{equation*}
  \begin{split}
    \left|u(s)-\Pi_{k,j}(s)\right|&\leq \left|\frac{\omega_k(s)}{\tau^k}\right|\sum_{l=0}^{k-2}|C_l^{(2)}|\int_0^1 \left|\nabla^2 u'(\rho t_{j-l}+(1-\rho)s)\right|  d\rho\\
    &\leq k!(t_j-s)\sum_{l=0}^{k-2}|C_l^{(2)}|\int_0^1 \Big(\rho^\beta \tau^{1+\beta} \left[u''\right]_{C^{0,\beta}[0,T]}+\widetilde{C}\tau^{2}\Big) ~d \rho\\
    &\leq \widetilde{C}(t_j-s)\Big(\tau^{1+\beta}\left[u''\right]_{C^{0,\beta}[0,T]}+\tau^{2}\Big),
  \end{split}
\end{equation*}
in which we use $\int_0^1 \rho^\beta d \rho \leq1$.

In generally, when $\textbf{m=p}$, $3\leq p \leq k-1$, that is, $u\in C^{p,\beta}[0,T]$, the following equality holds
\begin{equation*}
  \begin{split}
    u(s)-\Pi_{k,j}u(s)&=\frac{\omega_k(s)}{\tau^k} \sum_{l=0}^{k-p}C_l^{(p)}\left[\sum_{i=0}^{p}(-1)^i\binom{p}{i}\frac{u(s)-u(t_{j-l-i})}{(s-t_{j-l-i})}\right]\\
    &=\frac{\omega_k(s)}{\tau^k} \sum_{l=0}^{k-p}C_l^{(p)}\left[\sum_{i=0}^{p}(-1)^i\binom{p}{i}\int_0^1 u'(\rho t_{j-l-i}+(1-\rho)s)~d \rho\right]\\
    &=\frac{\omega_k(s)}{\tau^k}\sum_{l=0}^{k-p}C_l^{(p)} \int_0^1 \nabla^p u'(\rho t_{j-l}+(1-\rho)s) ~ d\rho.
  \end{split}
\end{equation*}
Applying Lemma \ref{lemma4.1}, we have
\begin{equation*}
  \begin{split}
    \left|u(s)-\Pi_{k,j}(s)\right|&\leq \left|\frac{\omega_k(s)}{\tau^k}\right|\sum_{l=0}^{k-p}|C_l^{(p)}|\int_0^1 \left|\nabla^p u'(\rho t_{j-l}+(1-\rho)s)\right|  d\rho\\
    &\leq k!(t_j-s)\sum_{l=0}^{k-p}|C_l^{(p)}|\int_0^1 \Big(\rho^\beta\tau^{p-1+\beta}\left[ u^{(p)}\right]_{C^{0,\beta}[0,T]}+\widetilde{C}\tau^{p}\Big) d \rho\\
    &\leq \widetilde{C}(t_j-s)\Big(\tau^{p-1+\beta}\left[u^{(p)}\right]_{C^{0,\beta}[0,T]}+\tau^{p}\Big),
  \end{split}
\end{equation*}
where we use $k!\sum_{l=0}^{k-p}|C_l^{(p)}|<\infty$ and $\int_0^1 \rho^\beta d \rho \leq1$.

Finally, when $\textbf{m=k}$, that is, $u\in C^{m,\beta}[0,T]$, the following equality holds
\begin{equation*}
  \begin{split}
    u(s)-\Pi_{k,j}u(s)&=\frac{\omega_k(s)}{d_0^{(k)}\tau^k} \sum_{l=0}^{k}(-1)^l\binom{k}{l}\frac{u(s)-u(t_{j-l})}{(s-t_{j-l})}\\
    &=\frac{\omega_k(s)}{d_0^{(k)}\tau^k} \sum_{i=0}^{k}(-1)^i\binom{k}{i}\int_0^1 u'(\rho t_{j-l}+(1-\rho)s)~d \rho\\
    &=\frac{\omega_k(s)}{d_0^{(k)}\tau^k}\int_0^1 \nabla^k u'(\rho t_{j-l}+(1-\rho)s) ~ d\rho.
  \end{split}
\end{equation*}
Applying Lemma \ref{lemma4.1}, we can get
\begin{equation*}
  \begin{split}
    \left|u(s)-\Pi_{k,j}(s)\right|&\leq \left|\frac{\omega_k(s)}{d_0^{(k)}\tau^k}\right|\int_0^1 \left|\nabla^k u'(\rho t_{j-l}+(1-\rho)s)\right|  d\rho\\
    &\leq (t_j-s)\int_0^1 (\rho^\beta\tau^{k-1+\beta}\left[ u^{(k)}\right]_{C^{0,\beta}[0,T]}+\widetilde{C}\tau^{k}) d \rho\\
    &\leq \widetilde{C}(t_j-s)\Big(\tau^{k-1+\beta}\left[u^{(k)}\right]_{C^{0,\beta}[0,T]}+\tau^{k}\Big).
  \end{split}
\end{equation*}
The proof is completed.
\end{proof}

In the following, we shall analyze the truncation errors of the $L_k$-type discretization errors for the Caputo derivative. Note that here we only discuss the errors of nodes $t_n\geq t_k$. The main results are summarized in the following theorem. 
\begin{theorem} \label{th4.1}
  Suppose that $u \in C^{m,\beta}[0,T]$ with $m=0,1,2,\dots,k$ and $0<\beta\leq 1$ with $k+\beta>\alpha$. Then the truncation errors of the $L_k$-type discretization for the Caputo derivative at $t_n\in [t_k,T]$ satisfy
  \begin{equation*}
    \begin{split}
    |\mathcal{D}_t^{\alpha}u(t_{n})-\widehat{\delta}_{\tau}^{\alpha}u(t_{n})|\leq
    \begin{cases}
    &\widetilde{C}[u^{(m)}]_{C^{0,\beta}[0,T]}\tau^{m+\beta-\alpha}, \quad m=0,1,\\
    &\widetilde{C}[u^{(m)}]_{C^{0,\beta}[0,T]}\tau^{m+\beta-\alpha}+\widetilde{C}\tau^{m+1}, \quad m=2,3,\dots,k.
    \end{cases} 
    \end{split}
  \end{equation*}
\end{theorem}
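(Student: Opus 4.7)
I would mirror the proof of Theorem \ref{th3.1}, using Lemma \ref{lemma4.2} in place of Lemma \ref{lemma2.1}. Starting from the equivalent representation \eqref{eq:caputo_integrated} and the $L_k$-type discretization \eqref{eq4.3}, the truncation error obeys
$$|\mathcal{D}_t^\alpha u(t_n) - \widehat{\delta}_\tau^\alpha u(t_n)| \le \frac{\alpha}{\Gamma(1-\alpha)} \int_0^{t_n} \frac{|u(s) - \mathcal{H}^n u(s)|}{(t_n - s)^{1+\alpha}}\, ds,$$
which I split into three pieces: a startup contribution $r_1^n$ over $[0, t_{k-1}]$ (on which $\mathcal{H}^n$ uses the lower-degree interpolants $\Pi_{j,j}$, $1 \le j \le k-1$), a bulk contribution $r_2^n$ over $[t_{k-1}, t_{n-1}]$ (full degree-$k$ interpolation away from the endpoint), and a terminal contribution $r_3^n$ over the last subinterval $[t_{n-1}, t_n]$ where the $(t_n - s)^{-1-\alpha}$ singularity is concentrated.

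For $r_2^n$, I insert the bound of Lemma \ref{lemma4.2} on each subinterval, exploit $(t_j - s) \le \tau$, and use the telescoping estimate
$$\sum_{j=k}^{n-1}\int_{t_{j-1}}^{t_j}(t_n - s)^{-1-\alpha}\, ds \;\le\; \int_0^{t_{n-1}}(t_n - s)^{-1-\alpha}\, ds \;\le\; \frac{\tau^{-\alpha}}{\alpha},$$
to obtain a piece of order $\tau^{m+\beta-\alpha}$ plus, for $m \ge 2$, a Taylor-remainder piece of order $\tau^{m+1-\alpha}$. For $r_3^n$ the linear factor $(t_j - s)$ in Lemma \ref{lemma4.2} cancels one power of the singularity, leaving $\int_{t_{n-1}}^{t_n}(t_n - s)^{-\alpha}\, ds = \tau^{1-\alpha}/(1-\alpha)$ and producing the same order; the case $m = 0$ additionally generates the marginally singular integral $\int_{t_{n-1}}^{t_n}(t_n - s)^{\beta-1-\alpha}\, ds = \tau^{\beta-\alpha}/(\beta - \alpha)$, whose finiteness is secured by the hypothesis $m + \beta > \alpha$. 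For $r_1^n$, the restriction $t_n \ge t_k$ forces $t_n - s \ge \tau$ on $[0, t_{k-1}]$, so $(t_n - s)^{-1-\alpha}$ is absolutely integrable there with constant depending only on $\alpha, k, T$; coupled with the obvious analogue of Lemma \ref{lemma4.2} for $\Pi_{j,j}$ (same argument, $k$ replaced by $j$), this term is of the same order $\tau^{m+\beta-\alpha}$ or better and is absorbed into $\widetilde{C}$.

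The main obstacle I anticipate is the bookkeeping across the three regimes $m = 0$, $m = 1$, and $m \ge 2$ of Lemma \ref{lemma4.2}: each case produces a distinct combination of $(t_j - s)$ factors and powers of $\tau$, and one must verify case-by-case that the exponent emerging from the integration against the kernel $(t_n - s)^{-1-\alpha}$ is exactly $m + \beta - \alpha$. A secondary check is that the Taylor-remainder term of order $\tau^{m+1-\alpha}$ is dominated by the leading $\tau^{m+\beta-\alpha}$ (immediate since $\beta \le 1$), so that the advertised "smoothness-minus-derivative-order" rate is preserved uniformly across all admissible $m$, $\beta$, and $n \ge k$.
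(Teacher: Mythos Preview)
Your proposal is correct and mirrors the paper's own argument: it too passes from \eqref{eq:caputo_integrated} and \eqref{eq4.3} to an integral error, invokes Lemma~\ref{lemma4.2} on each subinterval, and integrates against the kernel exactly as you describe for $r_2^n$ and $r_3^n$, treating the three regimes $m=0$, $m=1$, $m\ge 2$ separately. Your three-way split is in fact slightly more careful than the paper's, which collapses the startup interval $[0,t_{k-1}]$ into the bulk sum and tacitly writes $\Pi_{k,j}$ there as well; your separate handling of $r_1^n$ via the lower-degree analogue of Lemma~\ref{lemma4.2} and the bound $t_n-s\ge \tau$ is the cleaner way to dispose of that piece.
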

\begin{proof}
  Since the $L_k$-type scheme \eqref{eq4.3} approximates the Caputo derivative without using $\Pi_{k,j}$ at the points $t \leq t_{k-1}$, we only consider $t=t_n,~n=k,k+1,\dots,N.$ For the purpose of writing concisely, let $M_i:=[u^{(i)}]_{C^{0,\beta}[0,T]},\;i=0,1,\dots,k$. Combining \eqref{eq:caputo_integrated} with \eqref{eq4.3}, we have
  \begin{equation} \label{eq4.4}
    \begin{split}
      |\mathcal{D}^{\alpha}u(t_{n})-\widehat{\delta}_{\tau}^{\alpha}u(t_{n})|\leq\frac{\alpha}{\Gamma(1-\alpha)}\Bigg(\underbrace{\sum_{j=1}^{n-1}\int_{t_{j-1}}^{t_j}\frac{|\Pi_{k,j}u(s)-u(s)|}{(t_{n}-s)^{1+\alpha}}ds}_{\mathcal{E}^n_1}+\underbrace{\int_{t_{n-1}}^{t_n}\frac{|\Pi_{k,n-1}u(s)-u(s)|}{(t_{n}-s)^{1+\alpha}}ds}_{\mathcal{E}^n_2}\Bigg).
    \end{split}
  \end{equation}
  The proof process that follows is similar to Theorem 3.1.
  Firstly, when $\textbf{m=0}$ (thus, $\beta>\alpha$),  to estimate $\mathcal{E}^n_1$ and $\mathcal{E}^n_2$, we apply Lemma \ref{lemma4.2} to get
  $$
  \begin{cases}
    &\mathcal{E}^n_1\leq \widetilde{C}M_0\tau^{\beta} \sum_{j=1}^{n-1}\int_{t_{j-1}}^{t_j}(t_{n}-s)^{-1-\alpha}ds=\widetilde{C}M_0\tau^{\beta} \int_{0}^{t_{n-1}}(t_{n}-s)^{-1-\alpha}ds=\widetilde{C}M_0\tau^{\beta-\alpha},\\
    &\mathcal{E}^n_2\leq \widetilde{C}M_0\tau^{-1+\beta}\int_{t_{n-1}}^{t_{n}}(t_{n}-s)^{-\alpha}ds+M_0\int_{t_{n-1}}^{t_{n}}(t_{n}-s)^{-1+\beta-\alpha}ds=\widetilde{C}M_0\tau^{\beta-\alpha}.\\
  \end{cases}
  $$

Secondly, when $\textbf{m=1}$, to estimate $\mathcal{E}^n_1$ and $\mathcal{E}^n_2$, we can get from Lemma \ref{lemma4.2}
$$
\begin{cases}
  &\mathcal{E}^n_1\leq \widetilde{C}M_1\tau^{\beta+1} \sum_{j=1}^{n-1}\int_{t_{j-1}}^{t_j}(t_{n}-s)^{-1-\alpha}ds=\widetilde{C}M_1\tau^{\beta+1} \int_{0}^{t_{n-1}}(t_{n}-s)^{-1-\alpha}ds=\widetilde{C}M_1\tau^{1+\beta-\alpha},\\
  &\mathcal{E}^n_2\leq \widetilde{C}M_1\tau^{\beta}\int_{t_{n-1}}^{t_{n}}(t_{n}-s)^{-\alpha}ds=\widetilde{C}M_1\tau^{1+\beta-\alpha}.\\
\end{cases}
$$

Finally, when $\textbf{m=p}$, $2\leq p \leq k$, to estimate $\mathcal{E}^n_1$ and $\mathcal{E}^n_2$, by Lemma \ref{lemma4.2} to get
$$
\begin{cases}
  &\mathcal{E}^n_1 \leq \widetilde{C}M_p\tau^{p+\beta} \int_{0}^{t_{n-1}}(t_{n}-s)^{-1-\alpha}ds+\widetilde{C}\tau^{p+1}=\widetilde{C}M_p\tau^{m+\beta-\alpha}+\widetilde{C}\tau^{p+1},\\
  &\mathcal{E}^n_2\leq \widetilde{C}M_p\tau^{\beta+m-1}\int_{t_{n-1}}^{t_{n}}(t_{n}-s)^{-\alpha}ds+\widetilde{C}\tau^{p+1}=\widetilde{C}M_p\tau^{m+\beta-\alpha}+\widetilde{C}\tau^{p+1}.\\
\end{cases}
$$
Now we just need to substitute the estimations of $\mathcal{E}^n_1$ and $\mathcal{E}^n_2$ into \eqref{eq4.4} to obtain the desired result.
\end{proof}

\section{Numerical Experiment} \label{sec5}
To validate the theoretical error estimates, we conduct numerical experiments on synthetic functions with controlled regularity. We select test functions from the Hölder space \( C^{m,\beta}[0,1] \):
\begin{equation}\label{eq5.1}
  u_{\xi}(t):=(t-\xi)^m\left|t-\xi\right|^\beta, \quad m=0,1,2,\dots, \quad \beta \in (0,1],\quad \xi \in (0,1).
\end{equation}

\subsection{Test L2 and L1-2 Schemes}
We refer the L2 scheme from \cite{lv2016error} and L1-2 scheme from \cite{gao2014new} to approximate the Caputo derivative, respectively. 
The orders of convergence of L2 scheme can be estimated via the extrapolation, namely,
$$
R_{t=t_1}= \log_2 \frac{|\delta_\tau^\alpha u(\tau)-\delta_{\tau/128}^{\alpha} u(\tau)|}{|\delta_{\tau/2}^\alpha u(\tau)-\delta_{\tau/128}^{\alpha} u(\tau)|},\quad R_{t=\xi\in[t_2,T]}= \log_2 \frac{|\delta_\tau^\alpha u(\xi)-\delta_{\tau/2}^{\alpha} u(\xi)|}{|\delta_{\tau/2}^\alpha u(\xi)-\delta_{\tau/4}^{\alpha} u(\xi)|}.
$$
 In addition, the corresponding orders of convergence of L1-2 scheme can be defined similarly.

Tables \ref{tb1} and \ref{tb2} are the convergence orders of truncation errors from L2 and L1-2 schemes, respectively, for viscous choices of $\alpha,~m,~\beta$ with $\tau=2^{-7}$ and $\xi=0.5$. It is evident that the estimated orders of convergence are congruent with the theoretical value $m+\beta-\alpha$. It is worth noting that, if $m=2$ and $\beta=1$, that is, $u\in C^{2,1}[0,T]$, we can get the optimal order $3-\alpha$. Obviously, the numerical results are in accordance with our theoretical analysis. 

Since the Caputo derivative at $t=t_1$ is approximated by L1 scheme, and the numerical simulations for $m=0,1$ have been implemented in \cite{teso2025note}. Then we just verify, for $m=2$, the convergence orders of truncation error of {L2 and L1-2} schemes at $t_1$ for viscous choices of $\alpha,~\beta$. From the calculated data of Table \ref{tb3}, we can observe that the convergence orders only arrive at $2-\alpha$ for $m=2$ and different values of $\beta$. This result is also consistent with our theoretical analysis.

\subsection{Test \texorpdfstring{$L_k$}{Lk}-Type Scheme}
In order to verify the theoretical analysis of Section \ref{sec4}, we choose to approximate the Caputo derivatives by L3-type (often called L1-2-3) scheme \cite{cao2015high}. The convergence orders are calculated via the extrapolation:
$$
R_{t=\xi\in[t_3,T]}= \log_2 \frac{|\widehat{\delta}_\tau^\alpha u(\xi)-\widehat{\delta}_{\tau/2}^{\alpha} u(\xi)|}{|\widehat{\delta}_{\tau/2}^\alpha u(\xi)-\widehat{\delta}_{\tau/4}^{\alpha} u(\xi)|}.
$$

 Table \ref{tb4} shows the convergence orders of truncation error of {L1-2-3} scheme at $t=\xi\in[t_3,T]$ for viscous choices of $\alpha,~m,~\beta$ with $\tau=2^{-7}$ and $\xi=0.25$, where it can be found that the orders of convergence is still satisfying $m + \beta - \alpha$.

\begin{table}
  \center
  \caption{The convergence orders of truncation error of \textbf{L2} scheme at $t=\xi=0.5$ for viscous choices of $\alpha,~m,~\beta$ with $\tau=2^{-7}$.} \label{tb1}
  \begin{tabular}{ccccccccccc}
    \toprule
    & \multicolumn{10}{c}{$m+\beta$} \\
     \cmidrule{2-11} 
    $\alpha$ & 0.3 & 0.5 & 0.9 & 1.3 & 1.5 & 1.9 &  2.2 & 2.5 & 2.7 & 3.0\\
    \midrule
     0.1  &0.20&0.40&0.80&1.20&1.40&1.80&2.10&2.42&2.67&3.08\\
     0.3  &  \textemdash &0.20&0.60&1.00&1.20&1.60&1.90&2.20&2.41&2.77\\
     0.5  &  \textemdash &  \textemdash &0.40&0.80&1.00&1.40&1.70&2.00&2.20&2.51\\
     0.7  &  \textemdash &  \textemdash &0.20&0.60&0.80&1.20&1.50&1.80&2.00&2.30\\
    \bottomrule
  \end{tabular}
\end{table}

\begin{table}
  \center
  \caption{The convergence orders of truncation error of \textbf{L1-2} scheme at $t=\xi=0.5$ for viscous choices of $\alpha,~m,~\beta$ with $\tau=2^{-7}$.} \label{tb2}
  \begin{tabular}{ccccccccccc}
    \toprule
    & \multicolumn{10}{c}{$m+\beta$} \\
     \cmidrule{2-11} 
    $\alpha$ & 0.3 & 0.5 & 0.9 & 1.3 & 1.5 & 1.9 &  2.2 & 2.5 & 2.7 &3.0\\
    \midrule
     0.1  &0.20&0.40&0.80&1.20&1.40&1.82&2.07&2.36&2.54&2.77\\
     0.3  &  \textemdash &0.20&0.60&1.00&1.20&1.61&1.89&2.18&2.37&2.64\\
     0.5  &  \textemdash &  \textemdash &0.40&0.80&1.00&1.40&1.70&1.99&2.19&2.48\\
     0.7  &  \textemdash &  \textemdash &0.20&0.60&0.80&1.20&1.50&1.80&2.00&2.29\\
    \bottomrule
  \end{tabular}
\end{table}

\begin{table}
  \center
  \caption{The convergence orders of truncation error of \textbf{L2 and L1-2} schemes at $t_1$ for viscous choices of $\alpha,~\beta$ with $m=2$ and $\xi=0.5$.} \label{tb3}
  \begin{tabular}{cccccccc}
    \toprule
   & & \multicolumn{2}{c}{$\beta=0.2$}&\multicolumn{2}{c}{$\beta=0.5$}&\multicolumn{2}{c}{$\beta=0.8$} \\
     \cmidrule(r){3-4} \cmidrule(r){5-6} \cmidrule(r){7-8}
    $\alpha $& $\tau$ &Error & $R_{t=t_1}$ & Error & $R_{t=t_1}$ & Error & $R_{t=t_1}$  \\
    \midrule
     0.3& $2^{-7}$ &5.8218e-05&1.54&6.6987e-05&1.54&7.2928e-05&1.54\\
        & $2^{-8}$ &2.0033e-05&1.63&2.3034e-05&1.63&2.5059e-05&1.63\\
    \midrule
    0.5& $2^{-7}$  &2.9719e-04&1.41&3.4192e-04&1.41&3.7222e-04&1.41\\
       & $2^{-8}$  &1.1204e-04&1.47&1.2881e-04&1.47&1.4011e-04&1.47\\
    \midrule
    0.7& $2^{-7}$  &1.2478e-03&1.26&1.4355e-03&1.26&1.5625e-03&1.26\\
       & $2^{-8}$  &5.2040e-04&1.30&5.9818e-04&1.30&6.5059e-04&1.30\\
    \bottomrule
  \end{tabular}
\end{table}

\begin{table}
  \center
  \caption{The convergence orders of truncation error of \textbf{L1-2-3} scheme at $t=\xi=0.25$ for viscous choices of $\alpha,~m,~\beta$ with $\tau=2^{-7}$.} \label{tb4}
  \begin{tabular}{cccccccccc}
    \toprule
    & \multicolumn{9}{c}{$m+\beta$} \\
     \cmidrule{2-10} 
    $\alpha$  & 0.5 & 0.8 & 1.3 & 1.6 & 2.3 &  2.6 & 3.2 & 3.4 &3.6\\
    \midrule
     0.3  & 0.20 &0.50&1.00&1.30&1.94&2.18&2.94&3.03&3.11\\
     0.5  &  \textemdash   &0.30&0.80&1.10&1.78&2.06&2.78&2.92&3.06\\
     0.7  &  \textemdash   &0.10&0.60&0.90&1.59&1.89&2.54&2.72&2.91\\
    \bottomrule
  \end{tabular}
\end{table}

\section{Concluding} \label{sec6}
This study rigorously establishes the truncation error behavior of high-order discretization schemes for the Caputo derivative in H\"{o}lder spaces. By unifying the analysis of L2, L1-2, and general \( L_k \)-type methods (\( m \leq 6 \)), we demonstrate that the convergence order universally adheres to the law:
\[
\textit{Order of error} = \textit{Degree of smoothness} - \textit{Order of the derivative},
\]
where the ``degree of smoothness" is quantified as \(m + \beta\) for functions in \( C^{m,\beta}([0,T])\).
This result can be seen as a numerical consistency analysis for weakly smooth functions. As a possible application, combined with the numerical stability analysis in \cite{jin2024regularity, wang2023optimal}, especially the Mittag-Leffler stability analysis, we are expected to establish a global convergence result for nonsmooth solutions of time fractional nonlinear equations.

    \bibliographystyle{alpha}
   \bibliography{references}

      \end{document}